\documentclass[english]{amsart}

\usepackage{amsmath,amssymb,enumerate,bbm}

\usepackage[T1]{fontenc}
\usepackage[utf8]{inputenc}
\usepackage[all]{xy}

\usepackage{babel}
\usepackage{amstext}
\usepackage{amsmath}
\usepackage{amsfonts}
\usepackage{latexsym}
\usepackage{ifthen}

\usepackage{xypic}
\xyoption{all}
\pagestyle{plain}

\newcommand{\cal}{\mathcal}



\newtheorem{lemma1}[equation]{}

\newenvironment{lemma}{\begin{lemma1}{\bf Lemma.}}{\end{lemma1}}
\newenvironment{example}{\begin{lemma1}{\bf Example.}\rm}{\end{lemma1}}

\newenvironment{theorem}{\begin{lemma1}{\bf Theorem.}}{\end{lemma1}}
\newenvironment{proposition}{\begin{lemma1}{\bf Proposition.}}{\end{lemma1}}
\newenvironment{corollary}{\begin{lemma1}{\bf Corollary.}}{\end{lemma1}}
\newenvironment{remark}{\begin{lemma1}{\bf Remark.}\rm}{\end{lemma1}}
\newenvironment{definition}{\begin{lemma1}{\bf Definition.}}{\end{lemma1}}

\newenvironment{conjecture}{\begin {lemma1}{\bf Conjecture.}}{\end{lemma1}}

\newcommand{\Q}{\ensuremath{\mathbb{Q}}}
\newcommand{\Z}{\ensuremath{\mathbb{Z}}}
\newcommand{\C}{\ensuremath{\mathbb{C}}}
\newcommand{\N}{\ensuremath{\mathbb{N}}}
\newcommand{\PP}{\ensuremath{\mathbb{P}}}

\newcommand{\holom}[3]{\ensuremath{#1:#2  \rightarrow #3}}

\makeatletter
\ifnum\@ptsize=0 \addtolength{\hoffset}{-0.3cm} \fi \ifnum\@ptsize=2 \addtolength{\hoffset}{0.5cm} \fi \sloppy


\newcommand\sO{{\mathcal O}}

\setcounter{tocdepth}{1}

\title{Effective non-vanishing conjectures for projective threefolds} 
\date{November 19, 2008}

\author{Ama\"el Broustet}
\author{Andreas H\"oring}

\address{Ama\"el Broustet, Universit\'e Louis Pasteur, IRMA, 7 rue Ren\'e Descartes, 67084 Strasbourg, France}
\email{broustet@math.u-strasbg.fr}

\address{Andreas H\"oring, Universit\'e Paris 6, Institut de Math\'ematiques de Jussieu, Equipe de Topologie et G\'eom\'etrie Alg\'ebrique, 175, rue du Chevaleret, 75013 Paris, France}
\email{hoering@math.jussieu.fr}

\begin{document}

\begin{abstract}

Let $X$ be a smooth projective threefold, and let  $A$ be an ample line bundle
such that $K_X+A$ is nef.
We show that if $K_X$ or $-K_X$ is pseudo-effective, the adjoint bundle $K_X+A$ has global sections.
We also give a very short proof of the Beltrametti-Sommese conjecture in dimension three, recently proven by Fukuma:
if $A$ is  an ample line bundle
such that $K_X+2A$ is nef, the adjoint bundle $K_X+2A$ has global sections.
\end{abstract}

\maketitle

\vspace{-1ex}

\section{Introduction}

\medskip

Let $X$ be a projective complex manifold, and let $A$ be an ample line bundle such that $K_X+A$ is nef.
By the base-point free theorem the adjoint line bundle $K_X+A$ is semiample, so for $m$ sufficiently large the linear system $|m (K_X+A)|$ 
is not empty. In this note we study an effective version of this statement, conjectured by 
Ionescu \cite{Cet90} (cf. also  \cite{Amb99} and \cite{Ka00} for similar conjectures in the singular case).

\begin{conjecture}  \label{theconjecture}
Let $X$ be a projective manifold, and let $A$ be an ample line bundle such that $K_X+A$ is nef.
Then  we have
$$
H^0(X, K_X + A) \neq 0.
$$
\end{conjecture}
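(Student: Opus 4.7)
The plan is to convert the conclusion into a numerical Euler-characteristic inequality via Kodaira vanishing, and then settle that inequality by dichotomising on the position of $K_X$ in the pseudo-effective cone. Since $A$ is ample, Kodaira vanishing gives $H^{i}(X,K_X+A)=0$ for every $i>0$, so
\begin{equation*}
h^{0}(X,K_X+A)=\chi(X,K_X+A).
\end{equation*}
It therefore suffices to prove $\chi(X,K_X+A)>0$; by Hirzebruch--Riemann--Roch this quantity is a universal linear combination of the intersection numbers $A^{3}$, $K_X\cdot A^{2}$, $K_X^{2}\cdot A$, $c_{2}(X)\cdot A$ and $\chi(X,\sO_X)$, so the problem reduces to a sign analysis of these five quantities.

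In the case when $K_X$ is pseudo-effective, the divisor $K_X+A$ is automatically big (pseudo-effective plus ample), so in particular $(K_X+A)^{3}>0$; the BDPP characterisation of the pseudo-effective cone gives $K_X\cdot A^{2}\geq 0$, because $A^{2}$ is represented by a movable curve; and Miyaoka's generic semi-positivity of $\Omega_X^{1}$ (applied on a terminal minimal model obtained from the three-dimensional MMP) controls both $c_{2}(X)\cdot A$ and $K_X^{2}\cdot A$. Combined with a Miyaoka--Yau type bound on $\chi(\sO_X)$, these estimates should force the Riemann--Roch sum to be strictly positive.

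In the case when $-K_X$ is pseudo-effective, BDPP implies that $X$ is uniruled, so the MRC fibration $\pi\colon X\dashrightarrow Y$ has a rationally connected general fibre $F$ of dimension at least one. On $F$ the restriction $(K_X+A)|_F=K_F+A|_F$ is adjoint to the ample line bundle $A|_F$ on a rationally connected variety, and $H^{0}(F,K_F+A|_F)\neq 0$ follows from Kodaira vanishing together with $\chi(\sO_F)=1$ and Riemann--Roch on $F$ (where $\dim F\leq 3$, so only finitely many sub-cases arise). These fibrewise sections should then spread to a section of $K_X+A$ on $X$ via positivity of the direct-image sheaf $\pi_{*}\sO_{X}(K_X+A)$ on $Y$.

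The hard part will be the intermediate regime in which $K_X$ is neither pseudo-effective nor anti-pseudo-effective: neither BDPP nor the MRC reduction applies, the sign of $K_X\cdot A^{2}$ is unconstrained, and no obvious birational model isolates the positivity needed in the Riemann--Roch calculation. This is precisely the case the abstract omits, and settling it in full generality would presumably require a genuinely new ingredient beyond the combination of Kodaira vanishing, Hirzebruch--Riemann--Roch and the BDPP/MMP structural input invoked above.
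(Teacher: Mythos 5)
What you have written is not a proof of the statement, and you say so yourself in the last paragraph: the regime where $K_X$ is neither pseudo-effective nor anti-pseudo-effective is left completely untouched. To be fair, the statement is a \emph{conjecture} which the paper itself does not prove in general either; the paper only establishes the two extreme cases for threefolds ($K_X$ pseudo-effective, respectively $-K_X$ pseudo-effective, or more precisely generically nef), and explicitly postpones the remaining rationally connected case with non-generically-nef $-K_X$ to a sequel. So the honest verdict is: your reduction via Kodaira vanishing and Riemann--Roch is the right starting point and coincides with the paper's, but the proposal does not close the conjecture, and even the two cases you sketch contain genuine gaps when compared with how the paper actually handles them.

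In the non-uniruled case, your plan to control $c_2(X)\cdot A$ by applying Miyaoka's generic semipositivity ``on a terminal minimal model'' runs into exactly the obstacle the paper highlights: pseudo-effectivity of $c_2$ is a statement about \emph{minimal} threefolds and is destroyed by the blow-ups relating $X$ to its minimal model, so you cannot transport the inequality back to $X$; moreover Miyaoka's $c_2$-inequality needs a nef first Chern class, and $K_X$ itself need not be nef. The paper's device is to work on $X$ directly with the $\Q$-twisted bundle $\Omega_X\langle\frac1n A\rangle$, which is still generically nef and has nef first Chern class $K_X+A$ by hypothesis; this yields the explicit bound $c_2(X)\cdot H \geq -H\cdot(\frac{n-1}{n}K_X\cdot A+\frac{n-1}{2n}A^2)$ and then a clean positive lower bound for $\chi(X,K_X+A)$, with no ``Miyaoka--Yau type bound on $\chi(\sO_X)$'' needed (that term is absorbed via $\chi(\sO_X)=-\frac1{24}K_X\cdot c_2(X)$). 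In the case $-K_X$ pseudo-effective, your MRC-fibration argument has a real gap at the words ``these fibrewise sections should then spread'': $-K_X$ pseudo-effective only gives uniruledness, the MRC base $Y$ can be positive-dimensional, and producing a global section from sections on fibres would require positivity of the direct image $\pi_*\sO_X(K_X+A)$ \emph{on $Y$} that is essentially as hard as the original problem. The paper avoids fibrations altogether: it disposes of the non-big case by Kawamata, of the irregular case by Chen--Hacon, so that $\chi(\sO_X)\geq 1$, and then bounds $h^0(X,K_X+A)$ below by $\chi(X,K_X+A)-\chi(X,2K_X+A)=-\frac12 K_X\cdot(K_X+A)^2+2\chi(\sO_X)$, a difference in which the $c_2$-terms cancel and whose positivity follows from generic nefness of $-K_X$ tested against $K_X+A$. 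If you want a workable route, replace your second case by this $\chi$-difference trick; it is the key idea your proposal is missing.
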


If $X$ is a curve, Conjecture \ref{theconjecture} is an immediate application of the Riemann-Roch theorem.
In dimension two, the result follows from Riemann-Roch and 
classical results on the second Chern class of a surface \cite{Ka00}.
In dimension three the difficulty of the problem increases considerably since we lose the control over the second Chern class: 
by a difficult theorem of Miyaoka
the second Chern class of a minimal  threefold is pseudo-effective, but if $X$ is not minimal this is no longer the case.
Nevertheless we will see that a straightforward generalisation of a theorem of Miyaoka to the case of 
$\Q$-vector bundles allows us to give a very short proof of the conjecture for threefolds that are not uniruled
(originally due to Fukuma  \cite[Thm.3.3]{Fu07}).

\begin{theorem} \label{theoremnotuniruled}
Let $X$ be a smooth projective threefold such that $K_X$ is pseudo-effective (i.e. $X$ is not uniruled). 
Then Conjecture \ref{theconjecture} holds for $X$.
\end{theorem}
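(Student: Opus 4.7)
The plan is to combine Kawamata--Viehweg vanishing, Hirzebruch--Riemann--Roch on a threefold, and a Miyaoka-type positivity statement for $c_2(X)$ on a non-uniruled threefold, in order to reduce the theorem to the inequality $\chi(X,K_X+A)\ge 1$.

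First, since $A$ is ample, Kawamata--Viehweg vanishing gives $H^i(X,K_X+A)=0$ for all $i\ge 1$, so $h^0(X,K_X+A)=\chi(X,K_X+A)$. It therefore suffices to prove that $\chi(X,K_X+A)$ is strictly positive; being an integer, it will then be $\ge 1$.

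Next, I write Hirzebruch--Riemann--Roch for the line bundle $L=K_X+A$ on the threefold $X$, substitute $L-K_X=A$ and $2L-K_X=K_X+2A$, and use Noether's formula $\chi(\O_X)=-\tfrac{1}{24}K_X\cdot c_2(X)$. A short rearrangement produces the identity
$$24\,\chi(X,K_X+A) \;=\; 2\,(K_X+A)\cdot A\cdot(K_X+2A) \;+\; (K_X+2A)\cdot c_2(X).$$
The first summand is easy to bound. Since $K_X+A$ is nef and $A$ is ample, $K_X+2A$ is ample, and expanding
$$(K_X+A)\cdot A\cdot(K_X+2A) \;=\; A\cdot(K_X+A)^2 \;+\; K_X\cdot A^2 \;+\; A^3,$$
each term is non-negative: the first because $(K_X+A)|_S$ is nef on a smooth surface $S\in|mA|$, so that $A\cdot(K_X+A)^2=\tfrac{1}{m}\bigl((K_X+A)|_S\bigr)^2\ge 0$; the second because $K_X$ is pseudo-effective and $A^2$ is represented by a movable curve class (two general members of $|mA|$ intersect in a curve moving in a family covering $X$); and $A^3\ge 1$ because $A$ is an ample integral line bundle on a threefold. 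Hence this summand is at least~$2$.

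The main obstacle is the term $(K_X+2A)\cdot c_2(X)$, and this is precisely where the hint in the introduction enters. Miyaoka's generic semi-positivity of $\Omega_X$ on a non-uniruled variety, extended to $\Q$-vector bundles so that it can be applied directly on $X$ without first passing to a (necessarily singular) minimal model, yields $c_2(X)\cdot H\ge 0$ for every nef class $H$ on our non-uniruled threefold $X$. Applied to the ample class $H=K_X+2A$ this gives $(K_X+2A)\cdot c_2(X)\ge 0$. Combining with the preceding estimate, $24\,\chi(X,K_X+A)\ge 2$, whence $\chi(X,K_X+A)\ge 1$ by integrality and $H^0(X,K_X+A)\ne 0$.
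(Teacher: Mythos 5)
Your reduction to $\chi(X,K_X+A)>0$ via Kodaira vanishing and your Riemann--Roch rearrangement
$24\,\chi(X,K_X+A)=2\,(K_X+A)\cdot A\cdot(K_X+2A)+(K_X+2A)\cdot c_2(X)$
are exactly the paper's starting point, and your estimates for the first summand (nefness of $K_X+A$, pseudo-effectivity of $K_X$ against the movable class $A^2$, and $A^3\ge 1$) are fine. The gap is the claim that Miyaoka's generic semipositivity, extended to $\Q$-vector bundles, gives $(K_X+2A)\cdot c_2(X)\ge 0$, i.e.\ that $c_2(X)\cdot H\ge 0$ for every nef $H$ on a non-uniruled threefold. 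Miyaoka's theorem \cite[Thm.\ 6.1]{Miy87} requires the generically nef bundle to have \emph{nef first Chern class}; for $E=\Omega_X$ this means $K_X$ nef, i.e.\ $X$ minimal. The $\Q$-twist does not remove this restriction: to make $c_1$ nef you must twist by $\delta=\tfrac13 A$, and then the conclusion concerns $c_2(\Omega_X\hspace{-0.8ex}<\hspace{-0.8ex}\tfrac13 A\hspace{-0.8ex}>)=c_2(X)+\tfrac23 K_X\cdot A+\tfrac13 A^2$, so what you actually get is
$c_2(X)\cdot H\;\ge\;-H\cdot\bigl(\tfrac23 K_X\cdot A+\tfrac13 A^2\bigr)$,
a bound with a (typically negative) correction term, not $c_2(X)\cdot H\ge 0$. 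Indeed the introduction of the paper stresses precisely that pseudo-effectivity of $c_2$ is lost once $X$ is not minimal, so your key inequality is unjustified as stated and not a consequence of the tool you invoke.

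The repair is the paper's ``computational trick'': apply the corrected inequality with $H=K_X+2A$ (ample since $K_X+A$ is nef) and absorb the correction into your first summand. Concretely,
$2\,(K_X+A)\cdot A\cdot(K_X+2A)-(K_X+2A)\cdot\bigl(\tfrac23 K_X\cdot A+\tfrac13 A^2\bigr)=\tfrac43\,(K_X+2A)\cdot A\cdot\bigl(K_X+\tfrac54 A\bigr),$
which is strictly positive because $K_X+2A$, $A$ and $K_X+\tfrac54 A=(K_X+A)+\tfrac14 A$ are ample; hence $\chi(X,K_X+A)>0$. Note also that the paper proves slightly more: by an adjunction-theoretic reduction (blowing down when $K_X+A$ fails to be nef) it removes the nefness hypothesis on $K_X+A$ altogether, whereas your argument uses that hypothesis -- which is legitimate for the theorem as stated, since the conjecture assumes it.
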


Based on a detailed analysis of the properties of the second Chern class, Xie has shown that
Conjecture \ref{theconjecture} is true for 'many' threefolds with $-K_X$ nef \cite[Cor.4.5]{Xie05}. By a computational trick,
we eliminate the second Chern class from the problem and show
the conjecture for the much larger class of threefolds with pseudo-effective anticanonical bundle.

\begin{theorem} \label{theoremgenericallynef}
Let $X$ be a smooth projective threefold such that $-K_X$ is pseudo-effective.
Then Conjecture \ref{theconjecture} holds for $X$.
\end{theorem}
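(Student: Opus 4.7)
The plan is to reduce $h^0(K_X+A)\geq 1$ to a Riemann--Roch inequality via Kawamata--Viehweg vanishing, and then to eliminate the second Chern class $c_2(X)$---the usual obstruction in dimension three---through a Serre-duality identity. Since $A$ is ample, Kawamata--Viehweg vanishing gives $H^i(X,K_X+A)=0$ for $i\geq 1$, so $h^0(K_X+A)=\chi(K_X+A)$. The Hirzebruch--Riemann--Roch formula on $X$ reads
\[
\chi(K_X+L) \;=\; -\chi(\O_X) + \tfrac{1}{12}\,L\cdot c_2(X) + \tfrac{1}{12}\,L\,(L+K_X)\,(2L+K_X)
\]
for any line bundle $L$. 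Applying this to $L=A$ and to $L=-A$, summing, and using Serre duality in the form $\chi(K_X-A)=-\chi(A)$, the contribution from $c_2(X)$ cancels and one obtains the key $c_2$-free identity
\[
\chi(K_X+A)\;=\;\chi(A)-2\chi(\O_X)+\tfrac{1}{2}A^2\cdot K_X.
\]

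It then remains to prove that the right-hand side is at least $1$. The intersection number $A^2\cdot K_X$ is constrained from both sides: $A^2$ is a movable $1$-cycle (complete intersection of two ample classes), so pseudo-effectivity of $-K_X$ forces $A^2\cdot K_X\leq 0$, while nefness of $K_X+A$ forces $A^2\cdot K_X\geq -A^3$. For $\chi(A)$, the class $A-K_X$ is big as the sum of an ample and a pseudo-effective class, so $K_X-A$ is not pseudo-effective and $h^3(X,A)=h^0(K_X-A)=0$; the task is then to control the intermediate cohomology of $A$ by applying Kawamata--Viehweg vanishing to a nef and big $\bQ$-divisor cohomologically equivalent to an appropriate perturbation of $A$ (the pseudo-effectivity of $-K_X$ permits such a decomposition). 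The required lower bound then follows by combining these ingredients in the identity above.

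The hardest step is this last one. If $-K_X$ were actually nef, then $A-K_X$ would be ample, Kawamata--Viehweg would immediately give $H^i(X,A)=0$ for $i>0$, and elementary positivity would suffice; the substantive contribution of the theorem is to bridge the gap to the merely pseudo-effective case. I expect this to be done either via a finer vanishing argument using multiplier ideal sheaves attached to a divisor cohomologically equivalent to $A-K_X$, or via a case analysis separating $K_X\equiv 0$ (where $X$ is a minimal model, $\chi(\O_X)$ is small, and $A\cdot c_2\geq 0$ by Miyaoka makes a direct Riemann--Roch estimate work) from the case where $K_X$ is not pseudo-effective and $X$ is uniruled (where one reduces to a Mori fiber space). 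The elimination of $c_2(X)$ in the identity above is precisely what makes this split-the-cases strategy tractable, since it removes the need for any general positivity statement for $c_2$ on $X$.
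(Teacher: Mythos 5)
Your $c_2$-elimination identity is correct (the difference $\chi(K_X+A)-\chi(A)$ indeed equals $\tfrac12 K_X\cdot A^2-2\chi(X,\sO_X)$ after absorbing $\tfrac1{12}K_X\cdot c_2(X)=-2\chi(X,\sO_X)$), but the pairing you chose makes the hypothesis work \emph{against} you, and the step you yourself flag as hardest is a genuine gap. Under the assumption that $-K_X$ is pseudo-effective you get $K_X\cdot A^2\le 0$, so both correction terms in your identity are nonpositive and everything hinges on a strong lower bound for $\chi(X,A)$, namely $\chi(X,A)\ge 1+2\chi(X,\sO_X)-\tfrac12 K_X\cdot A^2$. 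You have no tool for this: Kawamata--Viehweg does not apply to $A=K_X+(A-K_X)$ because $A-K_X$ is only big (ample plus pseudo-effective), not nef, and the multiplier-ideal versions of vanishing give statements about $\sO_X(A)\otimes\sJ$ rather than $\sO_X(A)$; moreover, even granting $h^i(X,A)=0$ for $i>0$, you would still need $h^0(X,A)\ge 3+\tfrac12|K_X\cdot A^2|$, which is not available (ampleness alone gives no effectivity). The alternative case analysis you sketch (reduction to a Mori fiber space when $X$ is uniruled) is a different and substantial argument that is not carried out.

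The paper's proof uses the same philosophy --- kill $c_2(X)$ by taking a difference of Euler characteristics --- but with the other companion: it first disposes of the case where $K_X+A$ is nef but not big by quoting Kawamata, reduces to $X$ uniruled (the non-uniruled case being Theorem \ref{theoremnotuniruled}) and to $h^1(X,\sO_X)=0$ via Chen--Hacon, so that $\chi(X,\sO_X)\ge 1$, and then compares $\chi(X,K_X+A)$ with $\chi(X,2K_X+A)$. Since $2K_X+A=K_X+(K_X+A)$ with $K_X+A$ nef and big, Kawamata--Viehweg gives $\chi(X,2K_X+A)=h^0(X,2K_X+A)\ge 0$, and the difference computes to $-\tfrac12 K_X\cdot(K_X+A)^2+2\chi(X,\sO_X)$ (Lemma \ref{lemmaestimate}). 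Here the pseudo-effectivity (generic nefness) of $-K_X$ pairs against the square of the nef and big class $K_X+A$ (approximated by ample classes $K_X+A+\varepsilon H$) and gives $-K_X\cdot(K_X+A)^2\ge 0$, i.e. the \emph{good} sign, so the bound $h^0(X,K_X+A)\ge 2\chi(X,\sO_X)\ge 2$ follows with no assumption on $\chi(X,A)$. To repair your argument you would need to switch to this comparison (or otherwise produce the missing control on the intermediate cohomology of $A$), and also add the reduction for the case where $K_X+A$ is not big, which your write-up omits.
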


The proof of this theorem will not really use  the pseudoeffectivity of the anticanonical bundle, but only 
the slightly weaker condition that $-K_X$ is generically nef (cf. Definition \ref{definitiongenericallynef}).
Typically one expects this condition to hold for rationally connected varieties, but
there are even rational manifolds whose
anticanonical bundle is not generically nef (cf. Example \ref{examplebadanticanonical}).
From our point of view, Conjecture \ref{theconjecture} is much more difficult
for rationally connected threefolds with such a 'bad' anticanonical divisor
and we postpone their treatment to a sequel of this paper.

A weaker version of Conjecture \ref{theconjecture} is motivated by the adjunction theory of complex manifolds and still open in general :

\begin{conjecture} (Beltrametti-Sommese \cite[Conj. 7.2.7]{BS95}) \label{BSconjecture} 
Let $X$ be a projective manifold, and let $A$ be an ample line bundle such that $K_X + (\dim X-1) A$ is nef.
Then  we have
$$
H^0(X, K_X + (\dim X-1)A) \neq 0.
$$
\end{conjecture}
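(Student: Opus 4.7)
The plan is to reduce the statement to the theorems already proved in this paper. The key observation is that the hypothesis of the Beltrametti--Sommese conjecture in dimension three, namely that $A$ is ample and $K_X + 2A$ is nef, is exactly the hypothesis of Conjecture~\ref{theconjecture} applied to the pair $(X, 2A)$; the conclusion of Conjecture~\ref{theconjecture} for $(X, 2A)$ is $H^0(X, K_X + 2A) \neq 0$, which is the statement we want. Hence it suffices to verify Conjecture~\ref{theconjecture} for $(X, 2A)$.

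This reduction handles a large class of threefolds immediately: if $K_X$ is pseudo-effective, Theorem~\ref{theoremnotuniruled} applied to $(X, 2A)$ gives the conclusion; and if $-K_X$ is pseudo-effective, Theorem~\ref{theoremgenericallynef} applied to $(X, 2A)$ does the same.

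The hardest case is when neither $K_X$ nor $-K_X$ is pseudo-effective, so that $X$ is uniruled with `bad' anticanonical bundle---exactly the class of threefolds singled out in the introduction as difficult for Conjecture~\ref{theconjecture}. For the weaker Beltrametti--Sommese statement I would try to exploit the extra room afforded by the coefficient $2$: Kawamata--Viehweg vanishing (applicable since $2A$ is ample) yields $H^i(X, K_X + 2A) = 0$ for $i > 0$, so $h^0(X, K_X + 2A) = \chi(X, K_X + 2A)$, and combining Hirzebruch--Riemann--Roch with Noether's formula and the short exact sequence for a general smooth surface $S \in |A|$ (or $|mA|$ after passing to a sufficient multiple) gives
\[
\chi(X, K_X + 2A) = -\chi(\mathcal{O}_X) + 2\,\chi(\mathcal{O}_S) + \tfrac{1}{2}\, A^2 \cdot (K_X + 2A).
\]
The last term is nonnegative by the nefness of $K_X + 2A$; the main obstacle is to bound $2\chi(\mathcal{O}_S) - \chi(\mathcal{O}_X)$ from below without invoking a positivity of $c_2(X)$, which is precisely the feature unavailable in the absence of generic nefness of $-K_X$.
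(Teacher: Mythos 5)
Your reduction to Conjecture \ref{theconjecture} for the pair $(X,2A)$ and the use of Theorems \ref{theoremnotuniruled} and \ref{theoremgenericallynef} are fine, but the proof is not complete: in the remaining case ($X$ uniruled with $-K_X$ not pseudo-effective) you stop at exactly the point where the real work happens, explicitly leaving open how to bound $2\chi(\sO_S)-\chi(\sO_X)$ from below. That is a genuine gap, and it is the whole content of the theorem, since this is precisely the class of threefolds for which the stronger Conjecture \ref{theconjecture} is declared out of reach in the introduction. What you are missing is that no bound on $c_2(X)$ (nor any generic nefness) is needed at all. Your identity is equivalent, via the exact sequence $0 \to \sO_X(-A) \to \sO_X \to \sO_S \to 0$ and Serre duality, to
$$
\chi(X,K_X+2A) - 2\chi(X,K_X+A) = \tfrac{1}{2}(K_X+2A)\cdot A^2 + \chi(X,\sO_X),
$$
in which the $c_2$-terms from Riemann--Roch cancel; and the two quantities you need to control are handled as follows. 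First, $\chi(X,K_X+A) = h^0(X,K_X+A) \geq 0$ by Kodaira vanishing, which requires only the ampleness of $A$ (not nefness of $K_X+A$); equivalently $2\chi(\sO_S)-\chi(\sO_X) = \chi(\sO_X) + 2\chi(X,K_X+A) \geq \chi(\sO_X)$. Second, $\chi(X,\sO_X) \geq 1$: uniruledness gives $h^3(X,\sO_X) = h^0(X,K_X) = 0$; if $h^1(X,\sO_X) > 0$ one concludes directly by the Chen--Hacon theorem \cite[Thm.4.2]{CH02}, and otherwise $\chi(X,\sO_X) = 1 + h^2(X,\sO_X) \geq 1$.

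One more loose end in your sketch: nefness of $K_X+2A$ only gives $(K_X+2A)\cdot A^2 \geq 0$, and you should also justify that $h^0 = \chi$ when $K_X+2A$ is merely nef (Kodaira for the ample divisor $2A$ already suffices, so this is harmless). For strict positivity of the final bound, the argument splits: if $K_X+2A$ is numerically trivial, then $-K_X \equiv 2A$ is ample, so $X$ is Fano and the numerically trivial line bundle $K_X+2A$ is trivial, hence has a section; if not, the base-point free theorem gives an effective nonzero multiple of $K_X+2A$, whence $(K_X+2A)\cdot A^2 > 0$, and together with $\chi(X,\sO_X)\geq 1$ the displayed inequality yields $h^0(X,K_X+2A) > 0$. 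With these additions your route becomes the paper's proof (Lemma \ref{lemmabschi} plus the case discussion above); also note that the appeal to Theorem \ref{theoremgenericallynef} is then superfluous, since the uniruled case is treated uniformly.
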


We establish this conjecture in the threefold case.

\begin{theorem} \label{theorembs}
Let $X$ be a smooth projective threefold. Then Conjecture \ref{BSconjecture} holds for $X$.
\end{theorem}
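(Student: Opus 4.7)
The plan is to reduce the non-vanishing problem to the (classical) two-dimensional case of Conjecture \ref{theconjecture} by restricting to a general smooth surface in $|A|$, and then to lift sections back to $X$ via Kodaira vanishing. This gives a completely formal argument that, modulo one technical point about the existence of a smooth hyperplane section, is indeed very short.

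First, I would pick a smooth divisor $S \in |A|$ (the existence of which is the only delicate point, see below). By adjunction,
$$K_S \;=\; (K_X + S)|_S \;=\; (K_X + A)|_S,$$
and therefore $(K_X + 2A)|_S = K_S + A|_S$. Since $K_X + 2A$ is nef on $X$ and $A$ is ample, the restriction $K_S + A|_S$ is nef on $S$ and $A|_S$ is ample on $S$. The two-dimensional case of Conjecture \ref{theconjecture}---classical, as recalled in the introduction, via Riemann--Roch, Kodaira vanishing and the standard structure of $c_2$ on a surface---therefore yields
$$H^0(S,\,K_S + A|_S) \neq 0.$$

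Second, to transfer this to $X$, I would tensor the structure sequence $0 \to \O_X(-S) \to \O_X \to \O_S \to 0$ with $\O_X(K_X + 2A)$; using $S \sim A$ together with the adjunction formula above, this gives
$$0 \;\to\; \O_X(K_X + A) \;\to\; \O_X(K_X + 2A) \;\to\; \O_S(K_S + A|_S) \;\to\; 0.$$
By Kodaira vanishing applied to the ample line bundle $A$, one has $H^1(X, K_X + A) = 0$, so the associated restriction map
$$H^0(X, K_X + 2A) \;\twoheadrightarrow\; H^0(S, K_S + A|_S)$$
is surjective. Combined with the previous step, this yields $H^0(X, K_X + 2A) \neq 0$.

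The one genuine obstruction I anticipate is guaranteeing a smooth member of $|A|$: for an arbitrary ample line bundle on a smooth threefold, $|A|$ need not be base-point free and may even be empty. I would handle this by taking a smooth $T \in |mA|$ for $m$ large enough that $|mA|$ is base-point free (by Bertini), and recasting the restriction step in terms of the $\Q$-divisor $\tfrac{1}{m}T \sim_\Q A$ in the adjunction formula---or, equivalently, via a cyclic cover associated to $T$. This is the main technical point, but it does not alter the short and formal character of the argument above.
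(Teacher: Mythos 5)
Your reduction-to-a-surface argument is fine \emph{conditionally}: if $|A|$ contains a smooth member $S$, then indeed $(K_X+2A)|_S=K_S+A|_S$ is nef, $A|_S$ is ample, the surface case of the conjecture gives a section on $S$, and $H^1(X,K_X+A)=0$ lifts it. But the point you set aside as ``the one technical point'' is exactly where the proof breaks, and your proposed workaround does not repair it. For a general ample $A$ on a threefold the linear system $|A|$ can be empty (e.g.\ $A=L\boxtimes L\boxtimes L$ on $C\times C\times C$ with $C$ of genus $2$ and $L$ a non-effective degree-one line bundle), or nonempty with every member badly singular, and Bertini is of no help when $|A|$ has base points. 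Passing to a smooth $T\in|mA|$ destroys the adjoint structure: adjunction gives $(K_X+2A)|_T=K_T+(2-m)A|_T$, which for $m>2$ is $K_T$ \emph{minus} an ample divisor, so the two-dimensional non-vanishing result no longer applies, and the lifting step would need $H^1(X,K_X+(2-m)A)=0$, which is not a Kodaira-type vanishing. The cyclic cover $\pi\colon Y\to X$ of degree $m$ branched along $T$ does produce a smooth member of $|\pi^*A|$, but it also changes the canonical bundle, $K_Y=\pi^*(K_X+(m-1)A)$, so on $Y$ you are studying $\pi^*(K_X+kA)$ for large $k$, not $K_X+2A$; there is no way to descend the desired section. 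Writing $A\sim_{\Q}\frac1m T$ and invoking ``$\Q$-adjunction'' runs into the same obstruction: adjunction producing a surface to restrict to needs an honest integral member of $|A|$ (or at least a boundary with coefficient $1$), not a $\Q$-divisor with coefficient $1/m$. This is not a removable technicality --- if such a reduction worked, the full effective non-vanishing conjecture in dimension three would follow just as easily, whereas it is open precisely because no member of $|A|$ is available.

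For contrast, the paper's proof avoids any geometric restriction: by Kodaira vanishing $\chi(X,K_X+A)\ge 0$, so $h^0(X,K_X+2A)\ge\chi(X,K_X+2A)-2\chi(X,K_X+A)$, and Riemann--Roch shows this difference equals $\frac12(K_X+2A)\cdot A^2+\chi(X,\sO_X)$, with $c_2(X)$ cancelling. One then splits into cases: $K_X+2A$ numerically trivial (Fano, trivial bundle), $X$ non-uniruled (Theorem \ref{theoremnotuniruled}), $h^1(X,\sO_X)>0$ (Chen--Hacon), and the remaining case where $\chi(X,\sO_X)\ge1$ and $(K_X+2A)\cdot A^2>0$ by the base-point free theorem, so the bound is positive. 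If you want to salvage your approach, you would have to replace ``smooth member of $|A|$'' by an argument that needs no member at all; the numerical argument above is one way of doing exactly that.
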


Based on difficult adjunction theoretic considerations this statement was shown by Fukuma \cite{Fu06},
but we believe that our approach provides new insight into the nature of the problem: by Kodaira vanishing
the Euler characteristic $\chi(X, K_X + A)$ is non-negative, so $\chi(X, K_X+2A) - 2 \chi(X, K_X + A)$ is a lower bound
for $h^0(X, K_X + 2A)$. Yet this lower bound can be easily computed since
the second Chern class of $X$ does not appear in the formula.

\smallskip

Note furthermore that our statements are actually much more precise than what is stated above, since we give easily computable 
lower bounds on the dimensions of the linear systems in terms of intersection numbers and the holomorphic
Euler characteristic of $X$.

\section{Notation and basic remarks}

We work over the complex field $\C$. 
For standard definitions in complex algebraic geometry we refer to  \cite{Ha77}. 
Manifolds and varieties are always supposed to be irreducible.
If $X$ is a projective manifold, we will identify line bundles, Cartier divisors and invertible sheaves.
We will denote by $N^1(X)_\Q$ the $\Q$-vector space of $\Q$-divisors modulo numerical equivalence \cite[1.3]{Deb01}.

Note that if $A$ is an ample line bundle on $X$ and $m \in \N$, then 
by the Kodaira vanishing theorem the higher cohomology of $K_X+mA$ vanishes. 
In particular we have 
$$
h^0(X, K_X + mA) = \chi(X, K_X + mA)
$$
and in our proofs we will always compute $\chi(X, K_X + mA)$
via the Grothendieck-Riemann-Roch theorem \cite[App. A]{Ha77}:
let $X$ be a smooth projective threefold and $D$ a line bundle on $X$. Then we have
\begin{equation} \label{RR3}
\chi(X, D) = \frac{1}{12} D \cdot (D-K_X) \cdot (2D-K_X) +  \frac{1}{12} D \cdot c_2(X) + \chi(X, \sO_X)
\end{equation}
and
\begin{equation} \label{chiox}
\chi(X, {\cal O}_X) = \frac{-1}{24} K_X \cdot c_2(X).
\end{equation}

We will use the formalism of Chern classes of $\Q$-vector bundles \cite[Ch.6.2, Ch.8.1]{Laz04}: 
let $X$ be a projective manifold, let $E$ be a vector bundle of rank $r$ over $X$, and let
$\delta \in N^1(X)_\Q$ be the numerical class of a $\Q$-divisor, then
\begin{equation} \label{qc1}
c_1(E\hspace{-0.8ex}<\hspace{-0.8ex}\delta\hspace{-0.8ex}>) = c_1(E) + r \delta
\end{equation}
and
\begin{equation} \label{qc2}
c_2(E\hspace{-0.8ex}<\hspace{-0.8ex}\delta\hspace{-0.8ex}>) = c_2(E) + (r-1) c_1(E) \cdot \delta + \frac{r (r-1)}{2} \delta^2.
\end{equation}

\begin{definition} \label{definitiongenericallynef}
Let $X$ be a projective manifold, and let $L$ be a $\Q$-divisor on $X$. 
$L$ is pseudo-effective if 
$L \cdot C \geq 0$ for all irreducible curves $C$ which move in a family covering $X$. 

A $\Q$-vector bundle $E$ over $X$
is generically nef if the following holds:
given ample divisors $H_1, \ldots, H_{\dim X-1}$ , let
$C$ be a curve cut out by general elements in
$|m_j H_j|$ for $m_j \in \N$ sufficiently high. Then 
$E|_C$ is nef.
\end{definition}

Note that a $\Q$-divisor $L$ is generically nef if and only if for any collection of ample divisors $H_1, \ldots, H_{\dim X-1}$, we have
\[
L \cdot H_1 \cdot \ldots \cdot H_{\dim X-1} \geq 0.
\]
In particular a pseudo-effective divisor is generically nef.

Recall that by \cite{BDPP04}, a projective manifold $X$ is not uniruled
if and only if $K_X$ is pseudo-effective.

Finally we will use some basic facts from the adjunction theory of complex threefolds \cite[Ch.7]{BS95}.

\begin{theorem} \label{theoremadjunctiontheory}
Let $X$ be a smooth projective threefold that is not uniruled, and let $A$ be an ample line bundle on $X$.  
\begin{enumerate}
\item Then the adjoint bundle $K_X+2A$ is nef. 
\item If the adjoint bundle $K_X+A$ is not nef, there exists a birational morphism
\holom{f}{X}{X'} onto a smooth projective threefold $X'$ such that $X$ is the blow-up of a point in $X'$. 
Furthermore $K_X+2A$ is $f$-trivial, so $K_X+2A=f^* H$ for some line bundle $H$. 
In particular we have $A = f^* A' - E$ where $E$ is the exceptional divisor of $f$ and $A'$ an divisor on $X'$.
\end{enumerate}
\end{theorem}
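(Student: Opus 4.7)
The plan is to apply Mori's cone theorem together with the classification of extremal contractions on smooth projective threefolds. Since $X$ is not uniruled, $K_X$ is pseudo-effective by \cite{BDPP04}, so no $K_X$-negative extremal ray can be of fiber type; moreover, smooth threefolds admit no small contractions. Hence every $K_X$-negative extremal ray $R$ is divisorial, with contraction of one of Mori's five types (E1)--(E5); the length $-K_X \cdot \ell_R$ of $R$ (for $\ell_R$ a minimal rational curve in $R$) equals $2$ exactly in type (E1) and equals $1$ in types (E2)--(E5).

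For Part 1, I argue by contradiction. If $K_X + 2A$ is not nef, then, since it differs from $K_X$ by an ample class, the cone theorem produces an extremal ray $R$ with $(K_X + 2A) \cdot R < 0$, and such a ray is automatically $K_X$-negative. For a minimal rational curve $\ell \subset R$, ampleness of $A$ gives $A \cdot \ell \geq 1$, so $(K_X + 2A) \cdot \ell < 0$ forces $-K_X \cdot \ell \geq 3$, contradicting the length bound of $2$ for divisorial contractions.

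For Part 2, if $K_X + A$ is not nef the cone theorem yields a $(K_X + A)$-negative extremal ray $R$. Part 1 gives $(K_X + 2A) \cdot R \geq 0$, and subtracting forces $A \cdot R > 0$; a minimal rational curve $\ell \subset R$ then satisfies $-K_X \cdot \ell > A \cdot \ell \geq 1$, i.e.\ $-K_X \cdot \ell \geq 2$. The classification forces type (E1): $f : X \to X'$ is the blow-up of a smooth point of a smooth threefold $X'$, the exceptional divisor $E \cong \mathbb{P}^2$ contains a line $\ell$ with $-K_X \cdot \ell = 2$, and combining $A \cdot \ell \geq 1$ with $(K_X + 2A) \cdot \ell \geq 0$ yields $A \cdot \ell = 1$ and hence $(K_X + 2A) \cdot \ell = 0$. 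Using the decomposition $\pic(X) = f^* \pic(X') \oplus \Z \cdot E$ with $E \cdot \ell = -1$, the vanishing $(K_X + 2A) \cdot \ell = 0$ gives $K_X + 2A = f^* H$ for some $H \in \pic(X')$, while writing $A = f^* A' + nE$ and using $A \cdot \ell = 1$ gives $n = -1$, so $A = f^* A' - E$.

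The argument is mostly bookkeeping once the Mori-theoretic input is in hand; the main point of care is correctly invoking the length bounds for divisorial contractions on smooth threefolds, which, together with the absence of small contractions and the non-uniruledness hypothesis, is what isolates type (E1) in Part 2.
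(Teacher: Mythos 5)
Your argument is correct in substance, but note that the paper does not prove this statement at all: it is quoted as a known fact with a reference to Beltrametti--Sommese \cite[Ch.7]{BS95}, where it appears as part of the theory of the nefvalue and the first reduction of a polarized threefold. What you have written is essentially the standard Mori-theoretic proof underlying that theory, and it is a reasonable self-contained substitute: non-uniruledness excludes fiber-type contractions via \cite{BDPP04}, Mori's classification of extremal contractions of smooth threefolds excludes small contractions and gives the five divisorial types with their lengths, and the length bound $-K_X\cdot\ell\le 2$ kills any $(K_X+2A)$-negative ray, while a $(K_X+A)$-negative ray is forced to have length exactly $2$, i.e.\ to be the blow-down of a $\PP^2$ with normal bundle $\sO(-1)$ to a smooth point; the numerical bookkeeping with $\pic(X)=f^*\pic(X')\oplus\Z E$ then gives $K_X+2A=f^*H$ and $A=f^*A'-E$. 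Two small blemishes, neither a real gap: first, your labels are swapped relative to Mori's standard notation, in which the point blow-up of length $2$ is type (E2) and the curve blow-up of length $1$ is (E1); second, in Part 2 the equality $A\cdot\ell=1$ does not follow from ``$A\cdot\ell\ge 1$ and $(K_X+2A)\cdot\ell\ge 0$'' (the latter only reproduces $A\cdot\ell\ge 1$) but from the inequality $(K_X+A)\cdot\ell<0$, i.e.\ $A\cdot\ell<-K_X\cdot\ell=2$, which you had in fact already recorded in the preceding sentence.
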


\section{The non-uniruled case}

We extend a theorem of Miyaoka \cite[Thm. 6.1]{Miy87} to the case of $\Q$-vector bundles. 

\begin{theorem} \label{theoremmiyaokaQ}
Let $X$ be a projective manifold of dimension $n$,  let $E$ be a vector bundle over $X$,
and let $\delta$ be the numerical class of a $\Q$-divisor.
If $E\hspace{-0.8ex}<\hspace{-0.8ex}\delta\hspace{-0.8ex}>$ is generically nef and $c_1( E\hspace{-0.8ex}<\hspace{-0.8ex}\delta\hspace{-0.8ex}>)$ is nef, then
$$
c_2(E\hspace{-0.8ex}<\hspace{-0.8ex}\delta\hspace{-0.8ex}>) \cdot H_1 \cdot \ldots \cdot H_{n-2} \geq 0.
$$
\end{theorem}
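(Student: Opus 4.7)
\emph{Proof plan.} The strategy is to reduce to the classical integer version of Miyaoka's theorem \cite[Thm.~6.1]{Miy87} by passing to a finite cover on which the $\Q$-twist becomes a genuine line bundle. Choose $m \geq 1$ so that $m\delta$ is represented by an integer Cartier divisor $D$, and put $L := \sO_X(D)$. By the Bloch-Gieseker covering construction \cite[Thm.~4.1.10]{Laz04}, there exist a smooth projective variety $Y$, a finite flat morphism $\pi \colon Y \to X$, and a line bundle $L_0$ on $Y$ such that $L_0^{\otimes m} \cong \pi^*L$; in particular $c_1(L_0)=\pi^*\delta$ in $N^1(Y)_\Q$.

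Now consider the \emph{honest} vector bundle $F := \pi^* E \otimes L_0$ on $Y$. A direct computation from (\ref{qc1})--(\ref{qc2}) shows $c_i(F) = \pi^* c_i(E\langle\delta\rangle)$ for $i = 1, 2$. Two hypotheses must be checked on $Y$: first, $c_1(F) = \pi^* c_1(E\langle\delta\rangle)$ is nef as the pullback of a nef class under a finite morphism. Second, for ample divisors $H_1, \ldots, H_{n-1}$ on $X$ and a general complete intersection curve $C \subset X$ of divisors in $|m_j H_j|$, the preimage $C' := \pi^{-1}(C) \subset Y$ is a complete intersection of divisors in $|m_j \pi^* H_j|$, and $F|_{C'} = (\pi|_{C'})^* (E\langle\delta\rangle|_C)$ is the pullback of a nef $\Q$-vector bundle under a finite morphism of curves, hence nef. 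Applying Miyaoka's classical theorem to $F$ with polarizations $\pi^* H_1, \ldots, \pi^*H_{n-1}$ yields
\[
c_2(F) \cdot \pi^* H_1 \cdot \ldots \cdot \pi^* H_{n-2} \;=\; \deg(\pi) \cdot c_2(E\langle\delta\rangle) \cdot H_1 \cdots H_{n-2} \;\geq\; 0,
\]
which is the desired inequality.

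\emph{Main obstacle.} The subtle point is the generic nefness of $F$ on $Y$: the special curves $C' = \pi^{-1}(C)$ need not be generic complete intersections in $|m_j \pi^* H_j|$, so one cannot invoke Mehta-Ramanathan on $Y$ naively. The way around this is to observe that what Miyaoka's proof actually needs is the bound $\mu_{\min}(F) \geq 0$ for the polarization $(\pi^*H_j)$, and any destabilizing quotient of $F$ of minimum slope pulls back to a quotient of $F|_{C'}$ whose degree equals $(\prod m_j)\,\mu_{\min}(F)$; nefness of $F|_{C'}$ forces this to be non-negative. With $\mu_{\min}(F) \geq 0$ and $c_1(F)$ nef in hand, the standard argument via the Harder-Narasimhan filtration and the Bogomolov inequality applied to each semistable quotient (combined with successive Mehta-Ramanathan restriction down to a surface) completes the proof.
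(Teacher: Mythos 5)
Your proposal is correct and follows essentially the same route as the paper's proof: a Bloch--Gieseker covering to turn the $\Q$-twist into the honest bundle $f^*E\otimes N$, Miyaoka's theorem applied on the cover with the pulled-back polarizations, and the projection formula to descend the inequality. The ``main obstacle'' you flag (that preimages of general complete intersection curves are special members of $|m_j f^*H_j|$) is precisely the point the paper passes over with the one-line assertion that the pullback remains generically nef, and your justification via $\mu_{\min}\geq 0$ and Mehta--Ramanathan is a valid way to substantiate that assertion.
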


\begin{proof}
Fix $m \in \N$ sufficiently divisible such that $m \delta$ is the class of a line bundle $L$. 
By \cite[Thm.4.1.10]{Laz04} there exists a finite covering $f : Y \rightarrow X$ by a projective manifold $Y$ 
such that $f^* L \simeq N^{\otimes m}$, where $N$ is a line bundle on $Y$. 
In particular $f^* \delta$ is the class of the line bundle $N$.
Since  $E$ is generically nef, the 
vector bundle $f^*E$ is generically nef.
Furthermore by \cite[Lemma 8.12]{Laz04}
\[
c_1(f^* (E\hspace{-0.8ex}<\hspace{-0.8ex}\delta\hspace{-0.8ex}>)) = f^* c_1(E\hspace{-0.8ex}<\hspace{-0.8ex}\delta\hspace{-0.8ex}>), 
\]
so $c_1(f^* (E\hspace{-0.8ex}<\hspace{-0.8ex}\delta\hspace{-0.8ex}>))$ is a nef $\Q$-divisor.
Since $f^* (E\hspace{-0.8ex}<\hspace{-0.8ex}\delta\hspace{-0.8ex}>) \simeq f^*E\hspace{-0.8ex}<\hspace{-0.8ex}f^*\delta\hspace{-0.8ex}>$ 
has the same Chern classes as the vector bundle $f^*E \otimes N$, we can apply
\cite[Thm. 6.1]{Miy87} to $f^*E \otimes N$ and get
\[
f^*H_1 \cdot \ldots \cdot f^*H_{n-2} \cdot  f^* c_2(E\hspace{-0.8ex}<\hspace{-0.8ex}\delta\hspace{-0.8ex}>) = f^*H_1 \cdot \ldots \cdot f^*H_{n-2} \cdot c_2(f^*E \otimes N) \geq 0.
\]
Now $f$ is finite, so the projection formula \cite[Thm.3.2]{Ful98} implies
$$
\deg f \cdot H_1 \cdot \ldots \cdot H_{n-2} \cdot \ c_2(E\hspace{-0.8ex}<\hspace{-0.8ex}\delta\hspace{-0.8ex}>) = 
 f^*H_1 \cdot \ldots \cdot f^* H_{n-2}) \cdot f_* (f^* c_2(E\hspace{-0.8ex}<\hspace{-0.8ex}\delta\hspace{-0.8ex}>) \geq 0.
$$
\end{proof}

\begin{corollary} \label{corollarynotuniruled}
Let $X$ be a projective manifold of dimension $n$ such that $K_X$ is pseudo-effective (i.e. $X$ is not uniruled). Let $A$ be a nef $\Q$-divisor on $X$
such that $K_X+A$ is nef. Let $H_1, \ldots, H_{\dim X-2}$ be a collection of ample line bundles on $X$. Then we have
\[
H_1 \cdot  \ldots \cdot H_{n-2} \cdot c_2(X) \geq - H_1 \cdot  \ldots \cdot H_{n-2} \cdot (\frac{n-1}{n} K_X \cdot A + \frac{n-1}{2n} A^2).
\]
\end{corollary}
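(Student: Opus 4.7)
The plan is to apply Theorem~\ref{theoremmiyaokaQ} to the cotangent bundle $E = \Omega^1_X$, which has rank $n$, first Chern class $c_1(E) = K_X$ and second Chern class $c_2(E) = c_2(X)$. I would pick the $\Q$-twist $\delta = \tfrac{1}{n} A \in N^1(X)_\Q$; the whole point of this choice is that
\[
c_1\bigl(E\langle\delta\rangle\bigr) = K_X + n\cdot \tfrac{1}{n} A = K_X + A,
\]
which is nef by hypothesis. Substituting into the twist formula \eqref{qc2} then yields
\[
c_2\bigl(E\langle\delta\rangle\bigr) = c_2(X) + (n-1)\, K_X \cdot \tfrac{1}{n}A + \tfrac{n(n-1)}{2}\cdot \tfrac{1}{n^2}A^2 = c_2(X) + \tfrac{n-1}{n}\, K_X \cdot A + \tfrac{n-1}{2n}\, A^2.
\]
Once Theorem~\ref{theoremmiyaokaQ} applies, intersecting this expression with $H_1\cdot\ldots\cdot H_{n-2}$ and asking it to be non-negative is exactly a rearrangement of the inequality to be proved, so the corollary will follow by a one-line computation.

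The only remaining hypothesis to verify is that $E\langle\delta\rangle$ is generically nef. Since $A$ is nef, the class $\tfrac{1}{n} A$ restricts to a nef divisor on any curve, so twisting by $\delta$ preserves nefness on a general complete intersection curve $C$. It therefore suffices that $\Omega^1_X|_C$ be nef for $C$ cut out by general members of $|m_j H_j|$ for $m_j$ sufficiently large. This is Miyaoka's generic semipositivity of the cotangent bundle, which applies because $K_X$ is assumed pseudo-effective, i.e.\ $X$ is not uniruled by the BDPP criterion cited in Section~2.

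The main obstacle is conceptual rather than technical: it is finding the right $\Q$-twist so that $c_1(E\langle\delta\rangle)$ coincides with the given nef class $K_X+A$ and simultaneously $c_2(E\langle\delta\rangle)$ produces exactly the correction terms $\tfrac{n-1}{n}K_X\cdot A + \tfrac{n-1}{2n}A^2$ predicted by the statement. Once the twist $\delta = \tfrac{1}{n}A$ is in hand and the generic nefness of $\Omega^1_X$ is imported from Miyaoka, the corollary reduces to a direct substitution into Theorem~\ref{theoremmiyaokaQ}.
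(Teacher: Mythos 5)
Your proposal is correct and follows essentially the same route as the paper: twist $\Omega^1_X$ by $\delta=\tfrac{1}{n}A$, note that generic nefness of $\Omega^1_X$ (Miyaoka, via non-uniruledness) plus nefness of $A$ gives generic nefness of the twist while $c_1$ becomes the nef class $K_X+A$, then apply Theorem \ref{theoremmiyaokaQ} and the formula \eqref{qc2} for $c_2$ of a $\Q$-twist. Nothing to add.
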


\begin{proof}
Since $X$ is not uniruled, the cotangent bundle $\Omega_X$ is generically nef by Miyaoka's theorem \cite[Cor.6.4]{Miy87}. 
In particular the $\Q$-vector bundle $\Omega_X\hspace{-0.8ex}<\hspace{-0.8ex}\frac{1}{n}A\hspace{-0.8ex}>$ is generically nef, 
furthermore $c_1(\Omega_X\hspace{-0.8ex}<\hspace{-0.8ex}\frac{1}{n}A\hspace{-0.8ex}>) = K_X + A$ is nef.
Therefore Theorem \ref{theoremmiyaokaQ} yields
$$
H_1 \cdot  \ldots \cdot H_{n-2} \cdot c_2(\Omega_X\hspace{-0.8ex}<\hspace{-0.8ex}\frac{1}{n}A\hspace{-0.8ex}>) \geq 0.
$$ 
Since by Formula (\ref{qc2})
$$
c_2(\Omega_X\hspace{-0.8ex}<\hspace{-0.8ex}\frac{1}{n}A\hspace{-0.8ex}>) = c_2(X) + \frac{n-1}{n} K_X \cdot A + \frac{n-1}{2n} A^2,
$$
we get
$$
H_1 \cdot  \ldots \cdot H_{n-2} \cdot c_2(X) \geq - H_1 \cdot \ldots \cdot H_{n-2} \cdot (\frac{n-1}{n} K_X \cdot A + \frac{n-1}{2n} A^2).
$$
\end{proof}

\begin{remark}
The preceding statements are essentially equivalent to \cite[Thm.2.1, Cor. 2.2.1]{Fuk05}. The use of $\Q$-vector bundles allows us to bypass
the delicate and somewhat lengthy computations used to prove these statements.
\end{remark}

We will now use Corollary \ref{corollarynotuniruled} to give effective estimates on the dimension of the linear systems
$|K_X+A|$ and $|K_X+2A|$. 

\begin{proposition} \label{propositionnotuniruled}
Let $X$ be a smooth projective threefold such that $K_X$ is pseudo-effective (i.e. $X$ is not uniruled). Let $A$ be an ample line bundle on $X$. Then
we have
\[
h^0(X, K_X+A) \geq  \frac{1}{18} (K_X+2A) \cdot  A \cdot (K_X + \frac{5}{4} A) > 0
\]
and
\[
h^0(X, K_X+2A) - h^0(X, K_X+A) \geq \frac{1}{12} [A \cdot (K_X+2A) \cdot (K_X+ \frac{19}{3} A) + A^3] > 0.
\] 
\end{proposition}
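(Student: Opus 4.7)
The plan is to trade $h^0$ for $\chi$ using Kodaira vanishing, expand each Euler characteristic via Riemann-Roch, and then eliminate the $c_2(X)$ contribution with Corollary \ref{corollarynotuniruled}, following the general strategy advertised in the introduction. After substituting (\ref{RR3}) and (\ref{chiox}), the $\chi(X, \sO_X)$ term combines with the $D \cdot c_2(X)$ term to yield
\[
\chi(X, K_X+A) = \tfrac{1}{12}(K_X+A) \cdot A \cdot (K_X+2A) + \tfrac{1}{24}(K_X+2A) \cdot c_2(X),
\]
and a direct subtraction, in which the cubic part factors through $A \cdot (K_X+2A)$, gives
\[
\chi(X, K_X+2A) - \chi(X, K_X+A) = \tfrac{1}{12}\bigl[A \cdot (K_X+2A) \cdot (K_X+7A) + A \cdot c_2(X)\bigr].
\]

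The problem therefore reduces to bounding $(K_X+2A) \cdot c_2(X)$ and $A \cdot c_2(X)$ from below. The second bound is the direct content of Corollary \ref{corollarynotuniruled} applied with the ample class $H_1 = A$, giving $A \cdot c_2(X) \geq -\tfrac{2}{3} K_X \cdot A^2 - \tfrac{1}{3} A^3$. For the first I would like to apply the same corollary with $H_1 = K_X + 2A$, which is nef by Theorem \ref{theoremadjunctiontheory}.1 but only nef; since the corollary is stated for ample classes, I will apply it to the ample perturbations $K_X + 2A + \varepsilon A$ and let $\varepsilon \to 0$, using that both sides of the inequality are continuous in $H_1$. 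This yields $(K_X+2A) \cdot c_2(X) \geq -\tfrac{1}{3}(K_X+2A) \cdot A \cdot (2K_X+A)$.

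Substituting these lower bounds into the Riemann-Roch expressions is then pure bookkeeping. In the first estimate a common factor $\tfrac{1}{72} A \cdot (K_X+2A)$ emerges and the remaining bracket collapses to $6(K_X+A) - (2K_X+A) = 4(K_X + \tfrac{5}{4}A)$, giving the stated bound. In the second, one checks algebraically the identity $A(K_X+2A)(K_X+7A) - \tfrac{2}{3}K_X \cdot A^2 - \tfrac{1}{3}A^3 = A(K_X+2A)(K_X+\tfrac{19}{3}A) + A^3$. Strict positivity is then immediate: $K_X + \tfrac{5}{4}A = (K_X+A) + \tfrac{1}{4}A$ and $K_X + \tfrac{19}{3}A = (K_X+2A) + \tfrac{13}{3}A$ are nef-plus-ample and hence ample, so the triple products with $A$ and the nef class $K_X+2A$ are non-negative. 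For the first bound I would decompose $A(K_X+2A)(K_X + \tfrac{5}{4}A) = A(K_X+2A)(K_X+A) + \tfrac{1}{4}A^2 \cdot (K_X+2A)$ and use that the pseudo-effectivity of $K_X$, tested against the moving curve $A^2$, forces $K_X \cdot A^2 \geq 0$, so the second summand is at least $\tfrac{1}{2} A^3 > 0$; for the second bound the extra $A^3 > 0$ suffices.

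The one step beyond pure substitution is the limiting argument that feeds the nef, possibly non-ample class $K_X+2A$ into Corollary \ref{corollarynotuniruled}. It is mild, but it is the natural place where the proof requires a moment of care.
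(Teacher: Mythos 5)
Your Riemann--Roch bookkeeping is correct and coincides with the paper's computation: the identities $\chi(X,K_X+A)=\frac{1}{12}(K_X+A)\cdot A\cdot(K_X+2A)+\frac{1}{24}(K_X+2A)\cdot c_2(X)$ and $\chi(X,K_X+2A)-\chi(X,K_X+A)=\frac{1}{12}[A\cdot(K_X+2A)\cdot(K_X+7A)+A\cdot c_2(X)]$, as well as the algebra collapsing to $\frac{1}{18}(K_X+2A)\cdot A\cdot(K_X+\frac{5}{4}A)$ and to $\frac{1}{12}[A\cdot(K_X+2A)\cdot(K_X+\frac{19}{3}A)+A^3]$, all check out. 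But there is a genuine gap: the Proposition does \emph{not} assume that $K_X+A$ is nef, and your argument needs this. Corollary \ref{corollarynotuniruled} is stated for a nef $\Q$-divisor $A$ with $K_X+A$ nef, and its proof requires this: it applies Theorem \ref{theoremmiyaokaQ} to the $\Q$-twisted cotangent bundle, whose first Chern class is exactly $K_X+A$. So both inequalities you invoke, $A\cdot c_2(X)\geq -\frac{2}{3}K_X\cdot A^2-\frac{1}{3}A^3$ and $(K_X+2A)\cdot c_2(X)\geq -\frac{1}{3}(K_X+2A)\cdot A\cdot(2K_X+A)$, are only available once $K_X+A$ is known to be nef; the same nefness is silently used in your positivity argument when you write $K_X+\frac{5}{4}A=(K_X+A)+\frac{1}{4}A$ and call it ample. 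Feeding $2A$ into the corollary instead (where nefness of $K_X+2A$ is guaranteed by Theorem \ref{theoremadjunctiontheory}) gives a strictly weaker bound that does not produce the stated constants.

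The paper closes this gap with a preliminary reduction that your proposal is missing: if $K_X+A$ is not nef, part 2 of Theorem \ref{theoremadjunctiontheory} exhibits $X$ as the blow-up of a point of a smooth threefold $X'$, with $K_X+2A=f^*(K_{X'}+2A')$ and $A=f^*A'-E$, $A'$ ample. One then checks that $h^0(X,K_X+A)=h^0(X',K_{X'}+A')$ and $h^0(X,K_X+2A)=h^0(X',K_{X'}+2A')$, that the two intersection-theoretic lower bounds are unchanged when computed on $X'$, and concludes by induction on the Picard number. Once this reduction is in place, $K_X+2A=(K_X+A)+A$ is ample, so the $\varepsilon$-perturbation of $H_1=K_X+2A$ that you flagged as the delicate step becomes unnecessary; conversely, in the regime where $K_X+2A$ is genuinely only nef (i.e. $K_X+A$ not nef), the perturbation does not help, because the hypothesis of Corollary \ref{corollarynotuniruled} on $K_X+A$ already fails there. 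Apart from this missing reduction, your route is the same as the paper's.
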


\begin{remark} The first part of the proposition implies Theorem \ref{theoremnotuniruled}: in fact it shows
that if $X$ is not uniruled, we do not need the assumption that $K_X+A$ is nef.

Note furthermore that if $K_X$ is pseudo-effective and $K_X+A$ is nef, then
\[
 \frac{1}{18} (K_X+2A) \cdot  A \cdot (K_X + \frac{5}{4} A) = \frac{5}{36} A^3+ \frac{1}{8} K_X \cdot A^2 + \frac{1}{18} K_X \cdot (K_X+A) \cdot A
\geq  \frac{5}{36} A^3+ \frac{1}{8} K_X \cdot A^2
\]
so we slightly refine the lower bound given in \cite[Thm.3.2]{Fu07}.
For a thorough discussion of the optimality of the bound, cf. the examples in \cite[Ex.3.1]{Fu07}.
\end{remark}

\begin{proof}
{\it Step 1: Reduction to the case where $K_X+A$ is nef.}
If $K_X+A$ is not nef, let \holom{f}{X}{X'} be the birational morphism given by Theorem \ref{theoremadjunctiontheory}. 
We have $A = f^* A' - E$ where $E$ is the exceptional divisor of $f$ and $A'$ a line bundle on $X'$.
The Nakai-Moishezon criterion immediately shows 
that $A'$ is ample, moreover $A'^3=A^3+1$.
Since $K_X+A=f^*(K_{X'}+A')+E$ and  $K_X+2A= f^* (K_{X'}+2A')$, we get
$$
H^0(X, K_X+A) \simeq H^0(X', K_{X'}+A'), \ \ H^0(X, K_X+2A) \simeq H^0(X', K_{X'}+2A').
$$
Note furthermore that since $K_X+2A= f^* (K_{X'}+2A')$, we have
\[
(K_X+2A) \cdot A \cdot (K_X+\frac{5}{4} A)=
(K_{X'}+2A') \cdot A' \cdot (K_{X'}+\frac{5}{4} A')
\]
and
\[
A \cdot (K_X+2A) \cdot (K_X+\frac{19}{3} A)=
A' \cdot (K_{X'}+2A') \cdot (K_{X'}+\frac{19}{3} A')
\]
so it is sufficient to establish the statement on $X'$. Since the Picard number of $X'$ is strictly smaller than the Picard number of $X$, we conclude by induction.

{\it Step 2: proof of the first statement.} $K_X+A$ is nef by Step 1, so in particular the line bundle $K_X+2A$ is ample.
Therefore by Corollary \ref{corollarynotuniruled}
\[
(*) \qquad (K_X+2A) \cdot c_2(X) \geq -(K_X+2A) \cdot (\frac{2}{3} K_X \cdot A + \frac{1}{3} A^2).
\]
By Riemann-Roch (\ref{RR3}) and Formula (\ref{chiox}) we have
\[
\chi(X, K_X+A) 
= 
\frac{1}{12} (K_X+A) \cdot A \cdot (K_X+2A) +  \frac{1}{24} (K_X+2A) \cdot c_2(X). 
\]
So by $(*)$ the right hand side is bounded below by 
\[
\frac{1}{12} (K_X+A) \cdot A \cdot (K_X+2A) - \frac{1}{24} (K_X+2A) \cdot (\frac{2}{3} K_X \cdot A + \frac{1}{3} A^2)
= \frac{1}{18} (K_X+2A) \cdot  A \cdot (K_X + \frac{5}{4} A).
\]
Since $K_X+A$ is nef, the $\Q$-divisor $K_X+\frac{5}{4} A$ is ample, so this expression is strictly positive.

{\it Step 3: proof of the second statement.} Since $K_X+A$ is nef by Step 1, 
we have by Corollary \ref{corollarynotuniruled} 
\[
(**) \qquad A \cdot c_2(X) \geq -A \cdot (\frac{2}{3} K_X \cdot A + \frac{1}{3} A^2).
\]
By Riemann-Roch (\ref{RR3}) we have
$$
\chi(X, K_X+A) = \frac{1}{12} (K_X+A) \cdot A \cdot (K_X+2A) +  \frac{1}{12} (K_X+A) \cdot c_2(X) + \chi(X, \sO_X)
$$
and
$$
\chi(X, K_X+2A) = \frac{1}{12} (K_X+2A) \cdot 2A \cdot (K_X+4A) +  \frac{1}{12} (K_X+2A) \cdot c_2(X) + \chi(X, \sO_X).
$$
Therefore
$$
\chi(X, K_X+2A) - \chi(X, K_X+A)
= \frac{1}{12} (K_X+2A) \cdot A \cdot (K_X+7A) +  \frac{1}{12} A \cdot c_2(X)
$$
and by $(**)$ the right hand side is bounded below by 
\[
 \frac{1}{12} [(K_X+2A) \cdot A \cdot (K_X+7A) - A \cdot (\frac{2}{3} K_X \cdot A + \frac{1}{3} A^2)]
=
 \frac{1}{12} [A \cdot (K_X+2A) \cdot (K_X+ \frac{19}{3} A) + A^3].
\]
Since $K_X+A$ is nef, the line bundle $K_X+2A$ is ample, so this expression is strictly positive.
\end{proof}

\section{Proof of Theorem \ref{theoremgenericallynef}}

\begin{lemma} \label{lemmaestimate}
Let $X$ be a smooth projective threefold, 
and let $A$ be an ample line bundle such that $K_X+A$ is nef and big. Then we have
$$
h^0(X, K_X+A) \geq - \frac{1}{2} K_X \cdot (K_X+A)^2  + 2 \chi(X, \sO_X).
$$
\end{lemma}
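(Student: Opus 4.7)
The plan is to reduce the lemma to the non-negativity of a single Euler characteristic, using only Riemann--Roch, Serre duality, and Kawamata--Viehweg vanishing---so that the second Chern class of $X$ never needs to be estimated.

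Set $L := K_X + A$. Since $A$ is ample, Kodaira vanishing gives $h^0(X, L) = \chi(X, L)$, so it suffices to bound $\chi(X, L)$ from below. A direct expansion of formula (\ref{RR3}) shows that the $c_2(X)$-term and the term linear in $K_X$ are both odd in $D$, so adding Riemann--Roch for $D$ and for $-D$ yields the identity
$$
\chi(X, D) + \chi(X, -D) = 2\chi(X, \sO_X) - \frac{1}{2} K_X \cdot D^2
$$
for every line bundle $D$ on the threefold $X$. Combining this with Serre duality $\chi(X, -D) = -\chi(X, K_X + D)$ and specialising to $D = L$ gives
$$
\chi(X, L) = -\frac{1}{2} K_X \cdot L^2 + 2\chi(X, \sO_X) + \chi(X, K_X + L).
$$

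The hypothesis that $L$ is nef and big is now exactly what is needed: Kawamata--Viehweg vanishing applied to $K_X + L$ gives $h^i(X, K_X + L) = 0$ for all $i \geq 1$, whence $\chi(X, K_X + L) = h^0(X, K_X + L) \geq 0$. Substituting into the previous display proves the lemma. The argument is essentially formal once the symmetrisation trick $D \leftrightarrow -D$ is noticed, so I do not expect any serious obstacle; the only calculation to carry out is the verification of the displayed Riemann--Roch identity, which is immediate from (\ref{RR3}) and (\ref{chiox}).
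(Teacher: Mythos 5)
Your proof is correct and takes essentially the same route as the paper: both bound $h^0(X,K_X+A)=\chi(X,K_X+A)$ from below by $\chi(X,K_X+A)-\chi(X,2K_X+A)$, using Kawamata--Viehweg vanishing for $2K_X+A=K_X+(K_X+A)$, and your identity obtained via Serre duality and the $D\leftrightarrow -D$ symmetrisation is exactly the paper's Riemann--Roch computation of this difference, in which the $c_2$-terms cancel. One cosmetic slip in your justification: the term linear in $K_X$, namely $-\tfrac{1}{4}K_X\cdot D^2$ after the factor $\tfrac{1}{12}$, is \emph{even} in $D$ and is precisely the term that survives the symmetrisation (only the $c_2$-term and the terms odd in $D$ cancel), but the displayed identity you use is correct as stated.
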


\begin{remark} The bound given in the lemma is sharp: set $X=\PP^3$ and $A=\sO_{\PP^3}(5)$, then 
$$
h^0(\PP^3, K_{\PP^3}+A)= h^0(\PP^3, \sO_{\PP^3}(1)) = 4
$$
and  
$$
- \frac{1}{2} K_{\PP^3} \cdot (K_{\PP^3}+A)^2  + 2 \chi(\PP^3, \sO_{\PP^3}) = 4.
$$
\end{remark}

\begin{proof} 
Since $K_X+A$ is nef and big, the higher cohomology of $2 K_X+A$ vanishes by the Kawamata-Viehweg vanishing theorem, thus
$\chi(X, 2 K_X+A)\geq 0$ and
$$
\chi(X, K_X+A) \geq \chi(X, K_X+A) - \chi(X, 2 K_X+A).
$$
By Riemann-Roch (\ref{RR3}) we have
$$
\chi(X, K_X+A) = \frac{1}{12} (K_X+A) \cdot A \cdot (K_X+2A) +  \frac{1}{12} (K_X+A) \cdot c_2(X) + \chi(X, \sO_X).
$$
Again by Riemann-Roch  (\ref{RR3})  and Formula (\ref{chiox}), we get
\[
\chi(X, 2 K_X+A) 
= 
\frac{1}{12} (2 K_X+A) \cdot (K_X+A) \cdot (3 K_X+2A) +  \frac{1}{12} (K_X+A) \cdot c_2(X) - \chi(X, \sO_X),
\]
thus 
$$
\chi(X, K_X+A) - \chi(X, 2 K_X+A) 
= - \frac{1}{2} K_X \cdot (K_X+A)^2 + 2 \chi(X, \sO_X).
$$
\end{proof}

\begin{proof}[Proof of Theorem \ref{theoremgenericallynef}]
By \cite[Thm.3.1]{Ka00} the statement holds if $K_X+A$ is nef but not big, so we can suppose without loss of generality 
that $K_X+A$ is nef and big. Furthermore we may suppose that $X$ is uniruled, since otherwise we conclude with Theorem \ref{theoremnotuniruled}.
In particular we have by Serre duality  $0=h^0(X, K_X)=h^3(X, \sO_X)$. If $h^1(X, \sO_X) > 0$, we conclude with  \cite[Thm.4.2]{CH02}.
Thus we can suppose 
$$
\chi(X, \sO_X) = 1 + h^2(X, \sO_X) \geq 1. 
$$
Since $K_X+A$ is nef and big, the $\Q$-divisor $K_X+A+ \varepsilon H$ is ample for every $H$ ample and $\varepsilon >0$.
Since $-K_X$ is pseudo-effective (hence generically nef), we have
$$
-K_X \cdot (K_X+A + \varepsilon H) \cdot (K_X+A + \varepsilon H) \geq 0
$$ 
So Lemma  \ref{lemmaestimate} above implies the claim.
\end{proof}

The following example shows that there are rationally connected manifolds whose anticanonical bundle is not generically nef.
Note that this implies that the tangent bundle of $X$  is not generically nef (cf. \cite{Pet08}).

\begin{example} \label{examplebadanticanonical}
Let $(F_t)_{t \in \PP^1}$ be a pencil of generic hypersurfaces of degree at least $n+2$ in $\PP^{n}$.  The general member of the pencil is smooth
and by the adjunction formula
the canonical bundle of $F_t$ is ample. Let \holom{\mu}{X}{\PP^n} be a birational morphism that resolves the base points
of the pencil so that we have a fibration \holom{f}{X}{\PP^1} such that the general fibre $F$ is a member of the pencil. We claim 
that the anticanonical bundle of $X$ is not generically nef:
let $H$ be an ample line bundle on $X$ then for every rational number $\varepsilon>0$ the $\Q$-divisor $F+\varepsilon H$ is
ample. Since
$$
K_X \cdot (F+\varepsilon H)^{n-1} = (n-1) \varepsilon^{n-2} K_X \cdot F \cdot H^{n-2} +  \varepsilon^{n-1} K_X \cdot H^{n-1} 
$$
and  $K_X \cdot F \cdot H^{n-2}= K_F \cdot H|_F^{n-2} > 0$ we can choose a small rational $\varepsilon>0$ such that
$K_X \cdot (F+\varepsilon H)^{n-1}>0$. Let $m \in \N$ be sufficiently large and divisible such that $m (F+\varepsilon H)$
is linearly equivalent to a $\Z$-divisor $A$. Then the preceding computation shows that for every $m_1, \ldots, m_{n-1} \in \N$,
the anticanonical divisor $-K_X$ is $(m_1 A, \ldots, m_{n-1} A)$-antiample, that is
\[
-K_X \cdot (m_1 A) \cdot \ldots \cdot (m_{n-1} A) < 0.
\]
\end{example}

\section{Beltrametti-Sommese conjecture}

\begin{lemma} \label{lemmabschi}
Let $X$ be a smooth projective threefold, and let $A$ be an ample line bundle on $X$. Then
$$
h^0(X, K_X+2A) \geq  \frac{1}{2} (K_X+2A) \cdot A^2 + \chi(X, \sO_X).
$$ 
\end{lemma}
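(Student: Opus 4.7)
The plan is to use the trick flagged in the introduction: bound $h^0(X, K_X+2A)$ from below by the difference $\chi(X, K_X+2A) - 2 \chi(X, K_X+A)$, whose Riemann--Roch expansion turns out to be free of the second Chern class $c_2(X)$.

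First, since $A$ is ample, Kodaira vanishing applied separately to $K_X+A$ and $K_X+2A$ yields $h^i(X, K_X+A) = h^i(X, K_X+2A) = 0$ for all $i > 0$. In particular $h^0(X, K_X+A) = \chi(X, K_X+A) \geq 0$, while $h^0(X, K_X+2A) = \chi(X, K_X+2A)$, so
$$
h^0(X, K_X + 2A) \;\geq\; \chi(X, K_X+2A) - 2 \chi(X, K_X + A).
$$

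Second, I would expand each Euler characteristic via Riemann--Roch (\ref{RR3}). The coefficient of $c_2(X)$ in the resulting combination is $\frac{1}{12}[(K_X+2A) - 2(K_X+A)] = -\frac{1}{12} K_X$, so by formula (\ref{chiox}) this term equals exactly $2 \chi(X, \sO_X)$. Together with the constant contribution $\chi(X, \sO_X) - 2\chi(X, \sO_X) = -\chi(X, \sO_X)$, the total Euler characteristic contribution collapses to $+\chi(X, \sO_X)$. The remaining intersection-theoretic part is
$$
\tfrac{1}{6} A \cdot \bigl[(K_X+2A)(K_X+4A) - (K_X+A)(K_X+2A)\bigr] \;=\; \tfrac{1}{6} A \cdot (K_X+2A) \cdot 3A \;=\; \tfrac{1}{2} (K_X+2A) \cdot A^2,
$$
after factoring out the common divisor $(K_X+2A)$. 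Combining these two pieces gives precisely the claimed bound.

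There is essentially no obstacle here beyond a careful bookkeeping of the Riemann--Roch expansion; the conceptual content is the observation that the combination $\chi(K_X+2A) - 2\chi(K_X+A)$ annihilates the problematic $(K_X+\lambda A) \cdot c_2(X)$ terms up to a multiple of $K_X \cdot c_2(X)$, which is itself expressible through $\chi(\sO_X)$ via (\ref{chiox}). No further positivity beyond ampleness of $A$ is needed, which is consistent with the fact that no nef hypothesis on $K_X+2A$ appears in the statement.
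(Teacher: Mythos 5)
Your proposal is correct and follows exactly the paper's argument: bound $h^0(X,K_X+2A)=\chi(X,K_X+2A)$ below by $\chi(X,K_X+2A)-2\chi(X,K_X+A)$ using Kodaira vanishing, then expand via Riemann--Roch and observe that the $c_2(X)$ contribution reduces to $-\tfrac{1}{12}K_X\cdot c_2(X)=2\chi(X,\sO_X)$, yielding the stated bound. The bookkeeping in your computation matches the paper's proof of the lemma, so there is nothing to add.
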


\begin{remark} The bound given in the lemma is sharp: set $X=\PP^3$ and $A=\sO_{\PP^3}(3)$, then 
$$
h^0(\PP^3, K_{\PP^3}+2A) =  h^0(\PP^3, \sO_{\PP^3}(2)) = 10
$$
and  
$$
\frac{1}{2} (K_{\PP^3}+2A) \cdot A^2 + \chi(\PP^3, \sO_{\PP^3}) = 10.
$$
\end{remark}

\begin{proof}
By Kodaira vanishing  the higher cohomology of $K_X+A$ vanishes, thus
$\chi(X, K_X + A) \geq 0$ and hence
\[
\chi(X, K_X+2A) \geq \chi(X, K_X+2A) - 2 \chi(X, K_X + A).
\]
By Riemann-Roch (\ref{RR3}) and Formula (\ref{chiox}) we have
$$
\chi(X, K_X+A) = \frac{1}{12} (K_X+A) \cdot A \cdot (K_X+2A) +  \frac{1}{12} A \cdot c_2(X) - \chi(X, \sO_X)
$$
and
$$
\chi(X, K_X+2A) = \frac{1}{12} (K_X+2A) \cdot 2A \cdot (K_X+4A) +  \frac{1}{6} A \cdot c_2(X) - \chi(X, \sO_X).
$$
Therefore
\[
\chi(X, K_X+2A) - 2 \chi(X, K_X + A) =  \frac{1}{2} (K_X+2A) \cdot A^2 + \chi(X, \sO_X).
\]
\end{proof}

\begin{proof}[Proof of Theorem \ref{theorembs}]
If $K_X+2A$ is numerically trivial, the anticanonical divisor $-K_X \equiv_{num} 2A$ is ample, so $X$ is a Fano manifold. 
In particular the numerically trivial bundle $K_X+2A$ is trivial and thus has a section.

Suppose now that $K_X+2A$ is nef and not numerically trivial.
By the base-point free theorem a positive multiple of $K_X+2A$ has a global section, 
so this implies
$$
(K_X+2A) \cdot A^2 > 0.
$$
Furthermore we may suppose that $X$ is uniruled, since otherwise we conclude with Theorem \ref{theoremnotuniruled}.
In particular we have by Serre duality  $0=h^0(X, K_X)=h^3(X, \sO_X)$. If $h^1(X, \sO_X) > 0$, we conclude with  \cite[Thm.4.2]{CH02}.
Thus we can suppose 
$$
\chi(X, \sO_X) = 1 + h^2(X, \sO_X) \geq 1. 
$$
Conclude with Lemma \ref{lemmabschi}.
\end{proof}


\begin{thebibliography}{BDPP04}

\bibitem[Amb99]{Amb99}
F.~Ambro.
\newblock Ladders on {F}ano varieties.
\newblock {\em J. Math. Sci. (New York)}, 94(1):1126--1135, 1999.
\newblock Algebraic geometry, 9.

\bibitem[BDPP04]{BDPP04}
S{\'e}bastien Boucksom, Jean-Pierre Demailly, Mihai Paun, and Thomas Peternell.
\newblock The pseudo-effective cone of a compact k{\"a}hler manifold and
  varieties of negative {K}odaira dimension.
\newblock {\em arxiv preprint}, 0405285, 2004.

\bibitem[BS95]{BS95}
Mauro~C. Beltrametti and Andrew~J. Sommese.
\newblock {\em The adjunction theory of complex projective varieties},
  volume~16 of {\em de Gruyter Expositions in Mathematics}.
\newblock Walter de Gruyter \& Co., Berlin, 1995.

\bibitem[Cet93]{Cet90}
Open problems.
\newblock In {\em Geometry of complex projective varieties (Cetraro, 1990)},
  volume~9 of {\em Sem. Conf.}, pages 321--325. Mediterranean, Rende, 1993.

\bibitem[CH02]{CH02}
Jungkai~A. Chen and Christopher~D. Hacon.
\newblock Linear series of irregular varieties.
\newblock In {\em Algebraic geometry in {E}ast {A}sia ({K}yoto, 2001)}, pages
  143--153. World Sci. Publ., River Edge, NJ, 2002.

\bibitem[Deb01]{Deb01}
Olivier Debarre.
\newblock {\em Higher-dimensional algebraic geometry}.
\newblock Universitext. Springer-Verlag, New York, 2001.

\bibitem[Fuk05]{Fuk05}
Yoshiaki Fukuma.
\newblock A lower bound for the second sectional geometric genus of polarized
  manifolds.
\newblock {\em Adv. Geom.}, 5(3):431--454, 2005.

\bibitem[Fuk06]{Fu06}
Yoshiaki Fukuma.
\newblock On a conjecture of {B}eltrametti-{S}ommese for polarized 3-folds.
\newblock {\em Internat. J. Math.}, 17(7):761--789, 2006.

\bibitem[Fuk07]{Fu07}
Yoshiaki Fukuma.
\newblock On the dimension of global sections of adjoint bundles for polarized
  3-folds and 4-folds.
\newblock {\em J. Pure Appl. Algebra}, 211(3):609--621, 2007.

\bibitem[Ful98]{Ful98}
William Fulton.
\newblock {\em Intersection theory}, volume~2 of {\em Ergebnisse der Mathematik
  und ihrer Grenzgebiete.}
\newblock Springer-Verlag, Berlin, second edition, 1998.

\bibitem[Har77]{Ha77}
Robin Hartshorne.
\newblock {\em Algebraic geometry}.
\newblock Springer-Verlag, New York, 1977.
\newblock Graduate Texts in Mathematics, No. 52.

\bibitem[Kaw00]{Ka00}
Yujiro Kawamata.
\newblock On effective non-vanishing and base-point-freeness.
\newblock {\em Asian J. Math.}, 4(1):173--181, 2000.
\newblock Kodaira's issue.

\bibitem[Laz04]{Laz04}
Robert Lazarsfeld.
\newblock {\em Positivity in algebraic geometry. {I,II}}, volume~48 of {\em
  Ergebnisse der Mathematik und ihrer Grenzgebiete.}
\newblock Springer-Verlag, Berlin, 2004.

\bibitem[Miy87]{Miy87}
Yoichi Miyaoka.
\newblock The {C}hern classes and {K}odaira dimension of a minimal variety.
\newblock In {\em Algebraic geometry, {S}endai, 1985}, volume~10 of {\em Adv.
  Stud. Pure Math.}, pages 449--476. North-Holland, Amsterdam, 1987.

\bibitem[Pet08]{Pet08}
Thomas Peternell.
\newblock Varieties with generically nef tangent bundles.
\newblock {\em arXiv}, 0807.0982, 2008.

\bibitem[Xie05]{Xie05}
Qihong Xie.
\newblock On pseudo-effectivity of the second {C}hern classes for terminal
  threefolds.
\newblock {\em Asian J. Math.}, 9(1):121--132, 2005.

\end{thebibliography}
\end{document}